\theoremstyle{plain}
\newtheorem{theorem}{Theorem}
\newtheorem*{theorem*}{Theorem}
\newtheorem{lemma}{Lemma}
\newtheorem*{lemma*}{Lemma}
\theoremstyle{definition}
\newtheorem*{definition*}{Definition}
\theoremstyle{remark}
\newtheorem{remark}{Remark}
\newtheorem*{remark*}{Remark}
\begin{document}
\title[Certain functions and related problems]{On certain functions and related problems}
\author{Symon Serbenyuk}

\email{simon6@ukr.net}

\subjclass[2010]{11K55, 11J72, 26A27, 11B34,  39B22, 39B72, 26A30, 11B34.}

\keywords{ systems of functional equations, monotonic function,  nowhere monotonic function, singular function,
nowhere differentiable function,
$q$-ary expansion,  the Gauss-Kuzmin problem, the generalized shift operator.}

\begin{abstract}

The present article is devoted to the description of  further investigations of the author of this article. These investigations (in terms of various representations of real numbers) include  the generalized Salem functions and generalizations of the Gauss-Kuzmin problem.

\end{abstract}
\maketitle

Let $Q\equiv (q_k)$ be a fixed sequence of positive integers, $q_k>1$,  $\Theta_k$ be a sequence of the sets $\Theta_k\equiv\{0,1,\dots ,q_k-1\}$, and $\varepsilon_k\in\Theta_k$.

The Cantor series expansion 
\begin{equation}
\label{eq: Cantor series}
\frac{\varepsilon_1}{q_1}+\frac{\varepsilon_2}{q_1q_2}+\dots +\frac{\varepsilon_k}{q_1q_2\dots q_k}+\dots
\end{equation}
of $x\in [0,1]$,   first studied by G. Cantor in \cite{C1869}. It is easy to see that the Cantor series expansion is the  $q$-ary expansion
\begin{equation}
\label{eq: q-series}
\frac{\alpha_1}{q}+\frac{\alpha_2}{q^2}+\dots+\frac{\alpha_n}{q^n}+\dots
\end{equation}
of numbers  from the closed interval $[0,1]$ whenever the condition $q_k=q$ holds for all positive integers $k$. Here $q$ is a fixed positive integer, $q>1$, and $\alpha_n\in\{0,1,\dots , q-1\}$.

By $x=\Delta^Q _{\varepsilon_1\varepsilon_2\ldots\varepsilon_k\ldots}$  denote a number $x\in [0,1]$ represented by series \eqref{eq: Cantor series}. This notation is called \emph{the representation of $x$ by Cantor series \eqref{eq: Cantor series}.} Also, by $x=\Delta^q _{\alpha_1\alpha_2\ldots\alpha_k\ldots}$  denote a number $x\in [0,1]$ represented by series \eqref{eq: q-series}. This notation is called \emph{the $q$-ary representation of $x$.}

We note that certain numbers from $[0,1]$ have two different representations by Cantor series \eqref{eq: Cantor series}, i.e., 
$$
\Delta^Q _{\varepsilon_1\varepsilon_2\ldots\varepsilon_{m-1}\varepsilon_m000\ldots}=\Delta^Q _{\varepsilon_1\varepsilon_2\ldots\varepsilon_{m-1}[\varepsilon_m-1][q_{m+1}-1][q_{m+2}-1]\ldots}=\sum^{m} _{i=1}{\frac{\varepsilon_i}{q_1q_2\dots q_i}}.
$$
Such numbers are called \emph{$Q$-rational}. The other numbers in $[0,1]$ are called \emph{$Q$-irrational}.

Let $c_1,c_2,\dots, c_m$ be an
ordered tuple of integers such that $c_i\in\{0,1,\dots, q_i-~1\}$ for $i=\overline{1,m}$. 

\emph{A cylinder $\Delta^Q _{c_1c_2...c_m}$ of rank $m$ with base $c_1c_2\ldots c_m$} is a set of the form
$$
\Delta^Q _{c_1c_2...c_m}\equiv\{x: x=\Delta^Q _{c_1c_2...c_m\varepsilon_{m+1}\varepsilon_{m+2}\ldots\varepsilon_{m+k}\ldots}\}.
$$
That is any cylinder $\Delta^Q _{c_1c_2...c_m}$ is a closed interval of the form
$$
\left[\Delta^Q _{c_1c_2...c_m000}, \Delta^Q _{c_1c_2...c_m[q_{m+1}-1][q_{m+2}-1][q_{m+3}-1]...}\right].
$$

Define \emph{the shift operator $\sigma$ of expansion \eqref{eq: Cantor series}} by the rule
$$
\sigma(x)=\sigma\left(\Delta^Q _{\varepsilon_1\varepsilon_2\ldots\varepsilon_k\ldots}\right)=\sum^{\infty} _{k=2}{\frac{\varepsilon_k}{q_2q_3\dots q_k}}=q_1\Delta^{Q} _{0\varepsilon_2\ldots\varepsilon_k\ldots}.
$$

It is easy to see that 
\begin{equation*}
\label{eq: Cantor series 2}
\begin{split}
\sigma^n(x) &=\sigma^n\left(\Delta^Q _{\varepsilon_1\varepsilon_2\ldots\varepsilon_k\ldots}\right)\\
& =\sum^{\infty} _{k=n+1}{\frac{\varepsilon_k}{q_{n+1}q_{n+2}\dots q_k}}=q_1\dots q_n\Delta^{Q} _{\underbrace{0\ldots 0}_{n}\varepsilon_{n+1}\varepsilon_{n+2}\ldots}.
\end{split}
\end{equation*}

Therefore, 
\begin{equation}
\label{eq: Cantor series 3}
x=\sum^{n} _{i=1}{\frac{\varepsilon_i}{q_1q_2\dots q_i}}+\frac{1}{q_1q_2\dots q_n}\sigma^n(x).
\end{equation}

In \cite{S. Serbenyuk alternating Cantor series 2013}, the notion of  the generalized shift operator was introduced (in terms of alternating Cantor series (also, see \cite{preprint2013, slides2013})). One can describe this notion  in terms of positive Cantor series. 

Define the generalized shift operator $\sigma_m$  of expansion~\eqref{eq: Cantor series}  by the rule
\begin{equation*}
\label{eq: gen. shift op. for positive Cantor series}
\begin{split}
\sigma_m(x) &=\sigma_m\left(\Delta^Q _{\varepsilon_1\varepsilon_2\ldots\varepsilon_k\ldots}\right)=\sum^{m-1} _{i=1}{\frac{\varepsilon_i}{q_1q_2\cdots q_i}}+\sum^{m-1} _{j=m+1}{\frac{\varepsilon_j}{q_1q_2\cdots q_{m-1}q_{m+1}\cdots q_j}}\\
&\equiv \Delta^{Q^{'} _{m}} _{\varepsilon_1\varepsilon_2\ldots\varepsilon_{m-1}\varepsilon_{m+1}\ldots},
\end{split}
\end{equation*}
where $Q^{'} _{m}$ is the following sequence: $(q_1,q_2, \dots , q_{m-1}, q_{m+1}, q_{m+2}, \dots)$.

It is easy to see that
\begin{equation*}
\label{eq: gen. shift op. for positive Cantor series 1}
\begin{split}
\sigma_m(x) &=\sigma_m\left(\Delta^Q _{\varepsilon_1\varepsilon_2\ldots\varepsilon_k\ldots}\right)=\sum^{m-1} _{i=1}{\frac{\varepsilon_i}{q_1q_2\cdots q_i}}+\sum^{m-1} _{j=m+1}{\frac{\varepsilon_j}{q_1q_2\cdots q_{m-1}q_{m+1}\cdots q_j}}\\
&= xq_m-\frac{\varepsilon_m}{q_1q_2\cdots q_{m-1}}-(q_m-1)\sum^{m-1} _{i=1}{\frac{\varepsilon_i}{q_1q_2\cdots q_i}}.
\end{split}
\end{equation*}

Note that $\sigma_m=\sigma$ whenever $m=1$.

\begin{lemma}
The mapping $\sigma_m$ has the following properties:
\begin{enumerate}
\item  The mapping $\sigma_m$ is continuous at each point of the interval $\left(\inf\Delta^Q _{c_1c_2...c_m}, \sup\Delta^Q _{c_1c_2...c_m}\right)$. The endpoints of $\Delta^Q _{c_1c_2...c_m}$ are  points of discontinuity of the mapping.

\item The mapping $\sigma_m$ has a derivative almost everywhere (with respect to
the Lebesgue measure). If the mapping  has a derivative at the point $x=\Delta^Q _{\varepsilon_1\varepsilon_2...\varepsilon_k...}$, then $\left(\sigma_m\right)^{'}=q_m$.

\item The derivative of $\sigma_m$ does not exist at a Q-rational point 
$$
x=\Delta^Q _{\varepsilon_1\varepsilon_2...\varepsilon_n000...}=\Delta^Q _{\varepsilon_1\varepsilon_2...\varepsilon_{n-1}[\varepsilon_n-1][q_{n+1}-1][q_{n+2}-1]...}
$$ whenever $n=m$.

\item $\sigma \circ \underbrace{\sigma_2\circ \ldots \circ \sigma_2(x)}_{m}=\sigma^{m+1}(x)$.

\item Suppose $(k_n)$ is an arbitrary increasing sequence of positive integers.  Then
$$
\sigma^{k_n-1} \circ \sigma_{k_n} \circ \sigma_{k_{n-1}} \circ \ldots \circ \sigma_{k_1}(x)=\sigma^{k_n+n-1} (x).
$$
\end{enumerate}
\end{lemma}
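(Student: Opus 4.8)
The plan is to split the five assertions into two groups: items (1)--(3) rest on the single observation that on the interior of a cylinder of rank $m$ the map $\sigma_m$ coincides with an affine function of slope $q_m$, while items (4)--(5) are combinatorial identities about the positions of the $Q$-ary digits, which I would settle by bookkeeping. For (2) and the first sentence of (1), fix a base $c_1\dots c_m$. On the open interval $\left(\inf\Delta^Q_{c_1\dots c_m},\,\sup\Delta^Q_{c_1\dots c_m}\right)$ the digits $\varepsilon_1,\dots,\varepsilon_m$ of $x$ are frozen at $c_1,\dots,c_m$, so the closed form $\sigma_m(x)=xq_m-\frac{\varepsilon_m}{q_1\cdots q_{m-1}}-(q_m-1)\sum_{i=1}^{m-1}\frac{\varepsilon_i}{q_1\cdots q_i}$ shows that the restriction of $\sigma_m$ to that interval is affine with slope $q_m$; hence $\sigma_m$ is continuous there and $(\sigma_m)'=q_m$ at every interior point. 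The complement of the union of all open rank-$m$ cylinders is the set of cylinder endpoints, a countable hence Lebesgue-null set, so $(\sigma_m)'=q_m$ almost everywhere; and at an endpoint $x_0$ the map is still affine of slope $q_m$ on a left and on a right half-neighbourhood, so if a two-sided derivative exists there then $\sigma_m$ is continuous at $x_0$, the two affine branches agree in value and slope, hence coincide, and the derivative is again $q_m$. This gives (2).

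For the discontinuity assertion in (1) and for (3), let $x_0=\Delta^Q_{c_1\dots c_{m-1}c_m000\dots}$ with $c_m\ge 1$, which is exactly the $Q$-rational of (3) in the case $n=m$. It is simultaneously $\inf\Delta^Q_{c_1\dots c_{m-1}c_m}$ and $\sup\Delta^Q_{c_1\dots c_{m-1}[c_m-1]}=\Delta^Q_{c_1\dots c_{m-1}[c_m-1][q_{m+1}-1][q_{m+2}-1]\dots}$. Evaluating on the affine branch of the right-hand cylinder gives $\lim_{x\to x_0^+}\sigma_m(x)=\Delta^{Q'_m}_{c_1\dots c_{m-1}000\dots}=\inf\Delta^{Q'_m}_{c_1\dots c_{m-1}}$, while on the branch of the left-hand cylinder $\lim_{x\to x_0^-}\sigma_m(x)=\Delta^{Q'_m}_{c_1\dots c_{m-1}[q_{m+1}-1][q_{m+2}-1]\dots}=\sup\Delta^{Q'_m}_{c_1\dots c_{m-1}}$; these differ by the length $\frac{1}{q_1\cdots q_{m-1}}$ of a rank-$(m-1)$ cylinder in the $Q'_m$-system, a nonzero jump. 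Hence $\sigma_m$ is discontinuous at $x_0$ and in particular has no derivative there, which is (3); the remaining endpoints of rank-$m$ cylinders are handled by the same one-sided computation using the cylinder adjacent on the relevant side.

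For (4) and (5) I would work entirely with digit strings, via $\sigma_k(\Delta^Q_{\varepsilon_1\varepsilon_2\dots})=\Delta^{Q'_k}_{\varepsilon_1\dots\varepsilon_{k-1}\varepsilon_{k+1}\varepsilon_{k+2}\dots}$: the map $\sigma_k$ erases the symbol in position $k$, closes the gap, and deletes $q_k$ from the base sequence. For (5), with $k_1<\dots<k_n$, I claim by induction that in $\sigma_{k_j}\circ\cdots\circ\sigma_{k_1}(x)$ the $j$-th map erases the symbol currently occupying slot $k_j$, namely the original digit $\varepsilon_{k_j+j-1}$: the $j-1$ original positions removed so far are $k_1,\,k_2+1,\,\dots,\,k_{j-1}+j-2$, and since the $k_i$ increase these are all strictly below $k_j+j-1$, so original position $k_j+j-1$ has slid down to slot $k_j$. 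Consequently, after all $n$ maps the surviving string is $(\varepsilon_i)$ with $i$ running over $\mathbb{N}\setminus\{k_1,\,k_2+1,\,\dots,\,k_n+n-1\}$ over the correspondingly thinned base; all $n$ removed positions lie in $\{1,\dots,k_n+n-1\}$, with maximum $k_n+n-1$. Applying $\sigma^{k_n-1}$ then erases the first $k_n-1$ surviving symbols, and those together with the $n$ already-removed positions are precisely $\{1,\dots,k_n+n-1\}$; what remains is $\varepsilon_{k_n+n},\varepsilon_{k_n+n+1},\dots$ over the base $q_{k_n+n},q_{k_n+n+1},\dots$, which is $\sigma^{k_n+n-1}(x)$. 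Item (4) is the same count for the constant string $2,2,\dots,2$ of length $m$: the $i$-th $\sigma_2$ erases original position $i+1$, so the $m$-fold composite $\sigma_2\circ\cdots\circ\sigma_2$ erases positions $2,3,\dots,m+1$, and the outer $\sigma=\sigma_1$ then erases position $1$, leaving $\sigma^{m+1}(x)$.

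The one genuinely delicate point, which I expect to be the main obstacle, is that a $Q$-rational $x$ has two $Q$-ary representations on which the formula for $\sigma_m$ need not agree, so $\sigma_m$ and the composites must be pinned down by a fixed convention; since all five statements are insensitive to that choice, I would prove the identities of (4)--(5) first for $Q$-irrational $x$, where both the representation and the slot bookkeeping are unambiguous, and then verify the countably many $Q$-rationals directly. The remaining work is organisational: keeping the index shifts in (5) straight and, in (1) and (3), selecting the correct pair of neighbouring cylinders at the boundary point.
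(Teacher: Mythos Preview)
The paper states this lemma without proof, so there is nothing to compare against; your proposal stands on its own and is essentially correct. The affine observation from the closed form $\sigma_m(x)=xq_m-\frac{\varepsilon_m}{q_1\cdots q_{m-1}}-(q_m-1)\sum_{i<m}\frac{\varepsilon_i}{q_1\cdots q_i}$ on each open rank-$m$ cylinder immediately gives (1), (2) and (3), and your one-sided limit computation at a shared endpoint produces the jump $\tfrac{1}{q_1\cdots q_{m-1}}$ as claimed. Your digit-deletion bookkeeping for (5) is accurate: the key count is that after $j-1$ steps the removed original positions $k_1,k_2+1,\dots,k_{j-1}+j-2$ are $j-1$ distinct indices all strictly below $k_j+j-1$, so current slot $k_j$ is original index $k_j+j-1$; and after all $n$ steps exactly $k_n-1$ original positions $\le k_n+n-1$ survive, so the final $\sigma^{k_n-1}$ clears precisely $\{1,\dots,k_n+n-1\}$. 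Item (4) is not a special case of (5) since the sequence $(2,2,\dots,2)$ is not increasing, but your separate argument (the $i$-th $\sigma_2$ deletes original position $i+1$) is fine. The only cosmetic point is that the endpoints $0$ and $1$ of $[0,1]$ admit only one-sided neighbourhoods, so the discontinuity assertion in (1) should be read accordingly; this does not affect (2) or (3).
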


In the next articles of the author of this paper, the notion of the generalized shift operator will be investigated in more detail and applyed by the author of the present article  in terms of various representation of real numbers (e.g., positive and alternating Cantor series and their generalizations, as well as Luroth, Engel series, etc., various continued fractions).

Let us consider certain applications of the generalized shift operator. One can model generalizations of the Gauss-Kuzmin problem and generalizations of the Salem function.

\section{Generalizations of the Gauss-Kuzmin problem}

The Gauss-Kuzmin problem is to calculate the limit
$$\lim_{n\to\infty}{\lambda\left(E_n(x)\right)},
$$
where $\lambda(\cdot)$ is the Lebesgue measure of a set and  the set $E_n(x)$ is a set of the form
$$
E_n=\left\{z: \sigma^n (z)<x\right\}.
$$
Here $z=\Delta _{i_1i_2...i_k...}$, i.e., $\Delta _{i_1i_2...i_k...}$ is a certain representation of real numbers, $\sigma$ is the shift operator. 

Ganeralizations of the Gauss-Kuzmin problem are to calculate the limit
$$
\lim_{k\to\infty}{\lambda\left(\tilde E^{} _{n_k}(x)\right)},
$$
for  sets of the following forms:
\begin{itemize}
\item 
$$
\tilde E^{} _{n_k}(x)=\left\{z: \sigma_{n_k}\circ \sigma_{n_{k-1}}\circ \ldots \circ \sigma_{n_1}(z)<x\right\}
$$
including (here $(n_k)$ is a certain fixed sequence of positive integers) the cases when $(n_k)$ is a constant sequence. 

\item the set $\tilde E^{} _{n_k}(x)$ under the condition that $n_k=\psi(k)$, where $\psi$ is a certain function of the positive integer argument. 

\item
$$
\tilde E^{} _{n_k}(x)=\left\{z: \underbrace{\sigma_{n_k}\circ \sigma_{n_{k-1}}\circ \ldots \circ \sigma_{n_1}}_{\varphi(m,k,c)}(z)<x\right\},
$$
where $\varphi$ is a certain function and  $m,c$ are some parameters (if applicable). That is, for example, 
$$
\tilde E^{} _{n_k}(x)=\left\{z: \underbrace{\sigma_{m}\circ \sigma_{m}\circ \ldots \circ \sigma_{m}}_{k}(z)<x\right\},
$$
where $k>c$ and $c$ is a fixed positive integer, 
or
$$
\tilde E^{} _{n_k}(x)=\left\{z: \underbrace{\sigma_{m}\circ \sigma_{m}\circ \ldots \circ \sigma_{m}}_{k}(z)<x\right\},
$$
where $k \equiv 1 (\mod c) $ and $c>1$ is a fixed positive integer. 

\item In the general case,
$$
\tilde E^{} _{n_k}(x)=\left\{z: \underbrace{\sigma_{\psi(\varphi(m,k,c))}\circ  \ldots \circ \sigma_{\psi(1)}}_{\varphi(m,k,c)}(z)<x\right\},
$$
\end{itemize}

In addition, one can formulate  such problems in terms of the shift operator. For example, one can formulate the Gauss-Kuzmin problem for  the following sets:
$$
\tilde E^{} _{n_k}(z)=\left\{z: \sigma^{n_k}(z)<\sigma^{k_0}(z)\right\},
$$
where $k_0$, $(n_k)$ are a fixed number and a fixed sequence.
$$
\tilde E^{} _{n_k}(x)=\left\{z: \sigma^{n_k}(z)<\sigma^{k_0}(x)\right\}.
$$
$$
\tilde E^{} _{n}(x)=\left\{z: \sigma^{\psi(n)}(z)<x\right\},
$$
where $\psi(n)$ is a certain function of the positive integer argument.

In addition, 
$$
\tilde E^{} _{n}(z)=\left\{z: \sigma^{\psi(n)}(z)<\sigma^{\varphi(n)}(z)\right\},
$$
$$
\tilde E^{} _{n}(x)=\left\{z: \sigma^{\psi(n)}(z)<\sigma^{\varphi(n)}(x)\right\},
$$
where $\psi, \varphi$  are certain functions of the positive integer arguments.

It is easy to see that similar problems can be formulated for the case of the generalized shift operator.

In next articles of the author of the present article, such problems will be considered by the author of this article in terms of various  numeral systems (with a finite or infinite alphabet, with a constant or variable alphabet, positive, alternating, and sign-variable expansions, etc.).


\section{Generalizations of the Salem function}

In \cite{Salem1943}, Salem modeled the function 
$$
s(x)=s\left(\Delta^2 _{\alpha_1\alpha_2...\alpha_n...}\right)=\beta_{\alpha_1}+ \sum^{\infty} _{n=2} {\left(\beta_{\alpha_n}\prod^{n-1} _{i=1}{q_i}\right)}=y=\Delta^{Q_2} _{\alpha_1\alpha_2...\alpha_n...},
$$
where $q_0>0$, $q_1>0$, and $q_0+q_1=1$. This function is a singular function. However,  
generalizations of the Salem function can be non-differentiable functions or do not have the derivative on a certain set.

Note that certain Salem function generalizations are considered  in \cite{S. Serbenyuk systemy rivnyan 2-2, Symon2015, Symon2017, Symon2019} as well. 

Suppose $(n_k)$ is a fixed sequence of positive integers such that $n_i\ne n_j$ for $i\ne j$ and such that  for any $n\in\mathbb N$ there exists a number $k_0$ for which the condition $n_{k_0}=n$ holds.

Let us consider the following infinite system of functional equations
$$
f\left(\sigma_{n_{k-1}}\circ \sigma_{n_{k-2}}\circ \ldots \circ \sigma_{n_1}(x)\right)=\beta_{\alpha_{n_k}, n_k}+p_{\alpha_{n_k}, n_k}f\left(\sigma_{n_{k}}\circ \sigma_{n_{k-1}}\circ \ldots \circ \sigma_{n_1}(x)\right),
$$
where $k=1,2,\dots$, $\sigma_0(x)=x$, and $x$ represented in terms of a certan given numeral system, i.e., $x=\Delta_{\alpha_1\alpha_2...\alpha_k...}$ and $\alpha_n\in\{0,1,\dots , m_n\}$ for all positive integers $n$. Also, here $P=||p_{i,n}||$ is a fixed matrix, where  $i=\overline{0,m_n}$, $m_n\in \mathbb N\cup \{0\}$, $n=1,2,\dots$, and for elements  $p_{i,n}$ of $P$ the following system of conditions is true: 

$$
\left\{
\begin{aligned}
\label{eq: tilde Q 1}
1^{\circ}.~~~~~~~~~~~~~~~~~~~~~~~~~~~~~~~~~~~~~~~~~~~~~~ ~~~~~ p_{i,n}\in (-1,1)\\
2^{\circ}.  ~~~~~~~~~~~~~~~~~~~~~~~~~~~~~~~~~~~~~~~~~~\forall n \in \mathbb N: \sum^{m_n}_{i=0} {p_{i,n}}=1\\
3^{\circ}.  ~~~~~~~~~~~~~~~~~~~~~~~~\forall (i_n), i_n \in \mathbb N \cup \{0\}: \prod^{\infty} _{n=1} {|p_{i_n,n}|}=0\\
4^{\circ}.~~~~~~~~~~~~~~~\forall  i_n \in \mathbb N: 0=\beta_{0,n}<\beta_{i_n,n}=\sum^{i_n-1} _{i=0} {p_{i,n}}<1.\\
\end{aligned}
\right.
$$

In the next articles of the author of this paper, properties of solutions of the last system of functional equations will be investigated for the cases of various numeral systems (with a finite or infinite alphabet, with a constant or variable alphabet, positive, alternating, and sign-variable expansions, etc.). 

Now one can begin this investigation with the case of the $q$-ary representation of real numbers. 

\begin{theorem}
Let $P_q=\{p_0,p_1,\dots , p_{q-1}\}$ be a fixed tuple of real numbers such that $p_i\in (-1,1)$, where $i=\overline{0,q-1}$, $\sum_i {p_i}=1$, and $0=\beta_0<\beta_i=\sum^{i-1} _{j=0}{p_j}<1$ for all $i\ne 0$. Then the system of functional equations
\begin{equation}
\label{eq: system-q}
f\left(\sigma_{n_{k-1}}\circ \sigma_{n_{k-2}}\circ \ldots \circ \sigma_{n_1}(x)\right)=\beta_{\alpha_{n_k}, n_k}+p_{\alpha_{n_k}, n_k}f\left(\sigma_{n_{k}}\circ \sigma_{n_{k-1}}\circ \ldots \circ \sigma_{n_1}(x)\right),
\end{equation}
where $x=\Delta^q _{\alpha_1\alpha_2...\alpha_k...}$, has the unique solution
$$
g(x)=\beta_{\alpha_{n_1}}+\sum^{\infty} _{k=2}{\left(\beta_{\alpha_{n_k}}\prod^{k-1} _{j=1}{p_{\alpha_{n_j}}}\right)}
$$
in the class of determined and bounded on $[0, 1]$ functions.
\end{theorem}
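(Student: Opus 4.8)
The plan is to establish the theorem in three stages: (i) the series defining $g$ converges to a \emph{determined} (i.e.\ single-valued) bounded function on $[0,1]$; (ii) $g$ solves \eqref{eq: system-q}; (iii) no other bounded function solves \eqref{eq: system-q}. For (i), since $p_0,\dots,p_{q-1}\in(-1,1)$ and there are only finitely many of them, the constant $\rho:=\max_{0\le i\le q-1}|p_i|$ satisfies $\rho<1$, so $\bigl|\beta_{\alpha_{n_k}}\prod_{j=1}^{k-1}p_{\alpha_{n_j}}\bigr|\le\rho^{\,k-1}$, the series converges absolutely and uniformly, and $|g|\le(1-\rho)^{-1}$. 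The nontrivial point in (i) is that $g$ be determined: for a $q$-rational $x$ one must check that the two $q$-ary representations give the same value of $g$. This reduces to the relations $\beta_i-\beta_{i-1}=p_{i-1}$ (from $4^{\circ}$) and $\beta_{q-1}=1-p_{q-1}$ (from $2^{\circ}$, so that $\beta_{q-1}/(1-p_{q-1})=1$ because $p_{q-1}\ne1$), followed by summation of the geometric series $\beta_{q-1}\sum_{l\ge0}p_{q-1}^{\,l}$ and a telescoping, exactly as in Salem's original computation \cite{Salem1943}.

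For (ii), set $T_0=\mathrm{id}$ and $T_k=\sigma_{n_k}\circ\sigma_{n_{k-1}}\circ\cdots\circ\sigma_{n_1}$. By the defining property of $\sigma_m$ (and since the base sequence is constant, $q_k\equiv q$, each $\sigma_m$ maps $q$-ary points to $q$-ary points), $T_k(x)$ is again a point of $[0,1]$ whose $q$-ary digit string arises from that of $x=\Delta^q_{\alpha_1\alpha_2\ldots}$ by deleting the entries in the positions $n_1,\dots,n_k$; since $(n_k)$ exhausts $\mathbb N$ without repetition, the digit positions still present in $T_{k-1}(x)$ are enumerated, in the order in which $g$ reads them, by $n_k,n_{k+1},n_{k+2},\dots$\,. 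Hence
$$
g\bigl(T_{k-1}(x)\bigr)=\beta_{\alpha_{n_k}}+\sum_{l\ge1}\beta_{\alpha_{n_{k+l}}}\prod_{j=k}^{k+l-1}p_{\alpha_{n_j}}=\beta_{\alpha_{n_k}}+p_{\alpha_{n_k}}\,g\bigl(T_k(x)\bigr),
$$
which is precisely \eqref{eq: system-q}. I expect stage (ii), together with the determinedness part of (i), to be the main obstacle: it demands careful bookkeeping of how the composed generalized shifts permute and delete digit positions, and one has to handle $q$-rational points separately, since there $\sigma_m$ is discontinuous at the endpoints of rank-$m$ cylinders (Lemma, items (1) and (3)), so the identity above must be read on whichever representation keeps $T_k$ unambiguous.

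For (iii), let $f$ be any determined bounded solution of \eqref{eq: system-q}, with $|f|\le M$ on $[0,1]$. Iterating \eqref{eq: system-q} for $k=1,2,\dots,N$ yields
$$
f(x)=\sum_{k=1}^{N}\beta_{\alpha_{n_k}}\prod_{j=1}^{k-1}p_{\alpha_{n_j}}+\Bigl(\prod_{j=1}^{N}p_{\alpha_{n_j}}\Bigr)f\bigl(T_N(x)\bigr),
$$
whose remainder term is at most $M\rho^{N}$ in absolute value and therefore tends to $0$ as $N\to\infty$, while the partial sums converge to $g(x)$. Consequently $f\equiv g$, which, combined with (ii), shows that $g$ is the unique solution in the class of determined bounded functions.
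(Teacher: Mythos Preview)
Your stage~(iii) is exactly the paper's proof: the author simply iterates \eqref{eq: system-q}, obtains the partial sum plus a remainder $\bigl(\prod_{t=1}^{k}p_{\alpha_{n_t}}\bigr)g(T_k(x))$, and kills the remainder using boundedness and $\max_i|p_i|<1$. The paper does \emph{only} this uniqueness step and does not separately verify that the series defines a function satisfying the system. So on the part that overlaps, your argument and the paper's coincide.

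Where you go further than the paper, however, there is a genuine gap. Your claim in (i) that the two $q$-ary representations of a $q$-rational point yield the same value of $g$ is false for a general permutation $(n_k)$. Take $q=2$, $(n_k)=(2,1,3,4,5,\dots)$ and $x=1/2=\Delta^2_{1000\ldots}=\Delta^2_{0111\ldots}$. The first representation gives $g(x)=\beta_{\alpha_2}+\beta_{\alpha_1}p_{\alpha_2}+0+\cdots=p_0^2$, while the second gives $p_0+p_0^2p_1\sum_{l\ge0}p_1^{\,l}=p_0(1+p_1)$; these agree only if $p_0=1$. The Salem telescoping you invoke works precisely because the classical $g$ reads the digits in their natural order, so that the two representations differ only in a \emph{tail}; once $(n_k)$ reorders positions, the differing digits need no longer sit in a tail of the series for $g$, and the cancellation breaks down. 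This is consistent with the paper's Theorem~2, which explicitly allows $g$ to be discontinuous on a set depending on $(n_k)$: in this paper ``determined'' evidently means ``defined on $[0,1]$'' (with a fixed convention at $q$-rationals), not ``representation-independent''.

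A related issue affects your stage~(ii): applying $\sigma_{n_k}$ to $T_{k-1}(x)$ deletes the $n_k$-th digit of the \emph{current} string $T_{k-1}(x)$, not the digit originally in position $n_k$ of $x$. Hence the assertion that $T_{k-1}(x)$ has remaining original positions enumerated by $n_k,n_{k+1},\dots$ (in that order, as read by $g$) is not correct in general; with $(n_k)=(2,1,3,4,\dots)$ one computes $g(T_1(x))=\beta_{\alpha_3}+\beta_{\alpha_1}p_{\alpha_3}+\cdots$, not $\beta_{\alpha_1}+\beta_{\alpha_3}p_{\alpha_1}+\cdots$. Your hedge that this bookkeeping is ``the main obstacle'' is well placed, but as written the verification of \eqref{eq: system-q} does not go through. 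The paper sidesteps all of this by arguing only in the uniqueness direction.
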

\begin{proof}
Since the function $g$ is a determined on $[0,1]$ function, using system~\eqref{eq: system-q},  we get 
$$
g(x)=\beta_{\alpha_{n_1}}+p_{\alpha_{n_1}}g(\sigma_{n_1}(x))
$$
$$
=\beta_{\alpha_{n_1}}+p_{\alpha_{n_1}}(\beta_{\alpha_{n_2}}+p_{\alpha_{n_2}}g(\sigma_{n_2}\circ\sigma_{n_1}(x)))=\dots
$$
$$
\dots =\beta_{\alpha_{n_1}}+\beta_{\alpha_{n_2}}p_{\alpha_{n_1}}+\beta_{\alpha_{n_3}}p_{\alpha_{n_1}}p_{\alpha_{n_2}}+\dots +\beta_{\alpha_{n_k}}\prod^{k-1} _{j=1}{p_{\alpha_{n_j}}}+\left(\prod^{k} _{t=1}{p_{\alpha_{n_t}}}\right)g(\sigma_{n_k}\circ \dots \circ \sigma_{n_2}\circ \sigma_{n_1}(x)).
$$

So,
$$
g(x)=\beta_{\alpha_{n_1}}+\sum^{\infty} _{k=2}{\left(\beta_{\alpha_{n_k}}\prod^{k-1} _{j=1}{p_{\alpha_{n_j}}}\right)}
$$
since $g$ is a  determined and bounded on $[0,1]$ function and 
$$
\lim_{k\to\infty}{g(\sigma_{n_k}\circ \dots \circ \sigma_{n_2}\circ \sigma_{n_1}(x))\prod^{k} _{t=1}{p_{\alpha_{n_t}}}}=0,
$$
where
$$
\prod^{k} _{t=1}{p_{\alpha_{n_t}}}\le \left( \max_{0\le i\le q-1}{p_i}\right)^k\to 0, ~~~ k\to \infty.
$$
\end{proof}
 
\begin{theorem}
The function $g$ is continuous at $q$-irrational points of $[0,1]$. The set of all points of discontinuities of the function $g$ is a countable, finite, or empty set. It  depends on a sequence $(n_k)$.
\end{theorem}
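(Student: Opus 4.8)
The plan is to split $[0,1]$ into the $q$-irrational points, at which I shall prove continuity outright, and the $q$-rational points, which form a countable set and hence automatically contain the whole set of discontinuities.

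\emph{Continuity at a $q$-irrational point.} Fix a $q$-irrational $x=\Delta^q_{\alpha_1\alpha_2\ldots}$, whose $q$-ary representation is unique, and let $\varepsilon>0$. Put $\rho=\max_{0\le i\le q-1}|p_i|$, so $\rho\in[0,1)$ by hypothesis, and recall that $|\beta_i|<1$ for every $i$; hence each summand of the series defining $g$ satisfies $|\beta_{\alpha_{n_k}}\prod_{j=1}^{k-1}p_{\alpha_{n_j}}|<\rho^{\,k-1}$, so every tail $\sum_{k\ge K}(\cdot)$ is bounded in absolute value by $\rho^{\,K-1}/(1-\rho)$. The combinatorial core of the argument is that, since $(n_k)$ is a bijection of $\mathbb{N}$, the index $k^{*}(N):=\min\{k:n_k>N\}$ is finite for every $N$ and satisfies $k^{*}(N)\to\infty$ as $N\to\infty$: the numbers $n_1,\dots,n_C$ are fixed, so all of them lie in $\{1,\dots,N\}$ as soon as $N\ge\max(n_1,\dots,n_C)$, which forces $k^{*}(N)>C$. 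Consequently, if $y$ possesses a $q$-ary representation whose first $N$ digits are $\alpha_1,\dots,\alpha_N$, then $\alpha_{n_k}(y)=\alpha_{n_k}(x)$ for all $k<k^{*}(N)$, so the series for $g(y)$ and for $g(x)$ coincide in their first $k^{*}(N)-1$ terms, whence $|g(y)-g(x)|\le 2\rho^{\,k^{*}(N)-1}/(1-\rho)$. Now pick $N$ with $2\rho^{\,k^{*}(N)-1}/(1-\rho)<\varepsilon$ and then $\delta>0$ so small that $(x-\delta,x+\delta)$ lies in the interior of the cylinder $\Delta^q_{\alpha_1\ldots\alpha_N}$; this is possible precisely because $x$ is $q$-irrational, hence not an endpoint of any cylinder. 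Every $y$ in this neighbourhood has a representation beginning with $\alpha_1\ldots\alpha_N$, and therefore $|g(y)-g(x)|<\varepsilon$.

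\emph{The $q$-rational points.} Let $x=\Delta^q_{\alpha_1\ldots\alpha_{m-1}\alpha_m000\ldots}$ with $\alpha_m\ne0$, so that $x$ also equals $\Delta^q_{\alpha_1\ldots\alpha_{m-1}[\alpha_m-1][q-1][q-1]\ldots}$; evaluate $g(x)$ via the finite representation, which is the value the recursion of the previous theorem produces when finite representations are used throughout. Applying the tail estimate above along representations that agree with $\Delta^q_{\alpha_1\ldots\alpha_m000\ldots}$, respectively with the second representation of $x$, in their first $m+\ell$ digits shows that $\lim_{y\to x^{+}}g(y)=g(x)$ while $\lim_{y\to x^{-}}g(y)$ equals the value of the defining series on the second representation. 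Hence $g$ is right-continuous at every $q$-rational point, and it is continuous at $x$ if and only if the two representations of $x$ are mapped to the same value by the series. As the $q$-rationals form a countable set, the discontinuity set of $g$ is at most countable, that is, countable, finite, or empty, and which case occurs depends on $(n_k)$. For $n_k=k$ a short computation, using $\beta_{q-1}=1-p_{q-1}$ and $\sum_{\ell\ge0}p_{q-1}^{\ell}=1/(1-p_{q-1})$, shows that the two values always agree, so the set is empty. For the transposition $n_1=2$, $n_2=1$, $n_k=k$ $(k\ge3)$ the two values agree whenever $m\ge3$ but differ at some points with $m\le2$, so the set is finite and nonempty. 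For the pairwise transposition $n_{2k-1}=2k$, $n_{2k}=2k-1$ the two values differ at each point $q^{-2j}=\Delta^q_{\underbrace{0\ldots0}_{2j-1}1000\ldots}$, $j\in\mathbb{N}$ — the difference works out to $p_0^{\,2j-1}(1-p_0)\ne0$ — so the set is countably infinite.

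The main obstacle is the bookkeeping imposed by the rearrangement $(n_k)$: one has to verify that coincidence of the first $N$ \emph{ordinary} digits of $x$ and $y$ forces coincidence of a genuine prefix of the \emph{rearranged} digit strings, and that the length of this prefix tends to infinity — this is exactly where the bijectivity of $(n_k)$ is used, and it is what makes the estimate less transparent than in the classical case $n_k=k$, where the two prefixes literally coincide. A secondary delicate point, relevant only to the description of the exceptional set, is the analysis at the $q$-rational points: one must fix the convention that defines $g$ there, recognize the two one-sided limits as the series evaluated on the two representations of $x$, and then perform the finite algebraic comparison of those two values; it is the way $(n_k)$ shuffles the trailing blocks $000\ldots$ and $[q-1][q-1]\ldots$ relative to the ``carry'' digit that decides whether the discontinuity set is empty, finite, or countably infinite.
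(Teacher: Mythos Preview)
The paper states this theorem without proof, so there is no argument to compare your proposal against. Assessed on its own merits, your proof is correct. The continuity argument at $q$-irrational points is sound: the key observation that $k^{*}(N):=\min\{k:n_k>N\}\to\infty$ as $N\to\infty$ is exactly where bijectivity of $(n_k)$ enters, and together with the geometric tail bound $\sum_{k\ge K}\rho^{\,k-1}=\rho^{\,K-1}/(1-\rho)$ it yields the desired estimate. One point you pass over silently is harmless but worth recording: for $y$ in the \emph{open} interior of the rank-$N$ cylinder $\Delta^q_{\alpha_1\ldots\alpha_N}$, any $q$-rational $y$ necessarily has its ``carry'' position $m>N$, so \emph{both} of its representations begin with $\alpha_1\ldots\alpha_N$; hence the estimate $|g(y)-g(x)|\le 2\rho^{\,k^{*}(N)-1}/(1-\rho)$ holds regardless of which representation is used to define $g(y)$.

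Your treatment of the $q$-rational points is also correct, and the three examples you supply genuinely realise the three possibilities announced in the theorem. The computation for the pairwise transposition $n_{2k-1}=2k$, $n_{2k}=2k-1$ at $x=q^{-2j}$ gives $\beta_1 p_0^{\,2j-2}-p_0^{\,2j}=p_0^{\,2j-1}(1-p_0)$, which is nonzero because the hypothesis $0<\beta_1=p_0<1$ forces $p_0\in(0,1)$; for the single transposition you should perhaps note explicitly that the nonemptiness claim requires the $p_i$ not to be all equal to $1/q$ (for then $g$ is the identity and continuous everywhere), but for generic $P_q$ your example works. Since the paper offers no proof, your argument in fact supplies one.
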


\begin{theorem}
Lebesgue integral of the function $g$ can be calculated by the
formula
$$
\int^1 _0 {g(x)dx}=\frac{\beta_1+\beta_2+\dots +\beta_{q-1}}{q-1}.
$$
\end{theorem}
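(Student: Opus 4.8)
The plan is to integrate the explicit formula for $g$ furnished by the previous theorem term by term, and to evaluate the resulting integrals by exploiting the independence of the $q$-ary digits with respect to Lebesgue measure. Write $\alpha_m=\alpha_m(x)$ for the $m$-th $q$-ary digit of $x=\Delta^q_{\alpha_1\alpha_2\ldots}$, so that
$$
g(x)=\beta_{\alpha_{n_1}}+\sum_{k=2}^{\infty}\left(\beta_{\alpha_{n_k}}\prod_{j=1}^{k-1}p_{\alpha_{n_j}}\right).
$$
Put $p:=\max_{0\le i\le q-1}|p_i|$; since every $p_i\in(-1,1)$ we have $p<1$, and since $0\le\beta_i<1$ the $k$-th term of the series is bounded in absolute value by $p^{k-1}$, uniformly in $x$. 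Hence the series converges uniformly on $[0,1]$ by the Weierstrass $M$-test. Each partial sum is, up to the countable (hence Lebesgue-null) set of $q$-rational points, a step function constant on the $q$-ary cylinders of rank $\max\{n_1,\ldots,n_k\}$, so $g$ is bounded and measurable and term-by-term integration is legitimate:
$$
\int_0^1 g(x)\,dx=\int_0^1\beta_{\alpha_{n_1}}\,dx+\sum_{k=2}^{\infty}\int_0^1\beta_{\alpha_{n_k}}\prod_{j=1}^{k-1}p_{\alpha_{n_j}}\,dx.
$$

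The computational core is the identity: for pairwise distinct indices $m_1,\ldots,m_r$ and arbitrary $i_1,\ldots,i_r\in\{0,1,\ldots,q-1\}$,
$$
\lambda\bigl(\{x:\alpha_{m_1}(x)=i_1,\ldots,\alpha_{m_r}(x)=i_r\}\bigr)=q^{-r}.
$$
This follows from the cylinder description of the $q$-ary expansion: with $M=\max_t m_t$, the set on the left is a disjoint union of $q^{M-r}$ rank-$M$ cylinders, each of Lebesgue measure $q^{-M}$. Now recall that the $n_k$ are pairwise distinct, so for each fixed $k$ the digit functions $\alpha_{n_1},\ldots,\alpha_{n_k}$ involve distinct digits; combining the identity above with $\sum_{i=0}^{q-1}p_i=1$ and $\beta_0=0$ gives, for every $k\ge1$,
$$
\int_0^1\beta_{\alpha_{n_k}}\prod_{j=1}^{k-1}p_{\alpha_{n_j}}\,dx=q^{-k}\Bigl(\sum_{i=0}^{q-1}\beta_i\Bigr)\prod_{j=1}^{k-1}\Bigl(\sum_{i=0}^{q-1}p_i\Bigr)=q^{-k}\,(\beta_1+\beta_2+\cdots+\beta_{q-1}),
$$
where for $k=1$ the empty product is $1$ and the formula reduces to $\int_0^1\beta_{\alpha_{n_1}}\,dx=q^{-1}(\beta_1+\cdots+\beta_{q-1})$.

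Summing the geometric series then yields
$$
\int_0^1 g(x)\,dx=(\beta_1+\beta_2+\cdots+\beta_{q-1})\sum_{k=1}^{\infty}q^{-k}=\frac{\beta_1+\beta_2+\cdots+\beta_{q-1}}{q-1}.
$$
The only step that needs genuine care is the digit-independence identity, together with the observation that it is the distinctness condition $n_i\ne n_j$ (not merely the surjectivity assumption on $(n_k)$) which guarantees that the finitely many factors appearing in each term depend on pairwise distinct $q$-ary digits — precisely what the product formula for the measure requires. Everything else is a routine use of uniform convergence and summation of a geometric series.
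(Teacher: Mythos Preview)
The paper states this theorem without proof, so there is no argument to compare yours against. Your proof is correct: the series for $g$ converges uniformly by the Weierstrass $M$-test, the partial sums are simple functions on $q$-ary cylinders (hence measurable), and term-by-term integration is justified. The key computation---that distinct $q$-ary digits are independent and uniformly distributed under Lebesgue measure, so that each term factorises as $q^{-k}(\beta_1+\cdots+\beta_{q-1})$---is carried out cleanly, and the geometric sum gives the stated value. Your closing observation is also correct and worth emphasising: only the pairwise distinctness hypothesis $n_i\ne n_j$ is used in the integral computation, not the surjectivity of $(n_k)$.
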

\begin{remark}
 It can   be interesting to consider the case when $(n_k)$ is an arbitrary fixed  sequence (finite or infinite) of positive integers. Then  the function $g$ can be a constant function, a linear function,  or a function having pathological (complicated) structure, etc. It depends on  $(n_k)$. Such problems will be investigated in the next papers of the author of this article.
\end{remark}
To be continue...

\end{document}